\theoremstyle{definition}
\newtheorem{definition}{Definition}[section]
\newtheorem{theorem}[definition]{Theorem}
\newtheorem{lemma}[definition]{Lemma}
\newtheorem{notation}[definition]{Notation}
\newtheorem{corollary}[definition]{Corollary}
\title{ \vspace{-.15in}\textbf{On Split Steinberg Modules and Steinberg Modules} }
\author{\vspace{-.1in} Daniel Armeanu}
\address{The authors were supported by nsf grant dms-2202943.}
\author{Jeremy Miller \vspace{-.32in}}
\date{\today}
\begin{document}
\maketitle

\begin{abstract}
    Answering a question of Randal-Williams, we show the natural maps from split Steinberg modules of a Dedekind domain to the associated Steinberg modules are surjective.
\end{abstract}

\section{Introduction}

The goal of this paper is to show that split Steinberg modules of a Dedekind domain surjects onto the corresponding Steinberg modules. This relates an important representation in the theory of homological stability with an important representation in the theory of duality groups. We begin by reviewing definitions.

Fix a Dedekind domain $\Lambda$ and a rank-$n$ projective $\Lambda$-module $M$. The Tits building $T(M)$ is defined to be the geometric realization of the poset of proper summands of $M$ ordered by inclusion. By the Solomon-Tits theorem, there is a homotopy equivalence $T(M) \simeq \vee S^{n-2}$ \cite{Solomon} (see also \cite[Lemma 2.3]{CFP}) and we define the Steinberg module as $$St(M):=\widetilde H_{n-2}(T(M)).$$ Similarly, the split Tits building or Charney building $\widetilde T(M)$ is defined as the geometric realization of the poset of pairs of proper submodules $(P,Q)$ with $P \oplus Q = M$ ordered by inclusion on the first factor and reverse inclusion on the second factor. Charney \cite[Theorem 1.1]{Charney} proved that $\Tilde{T}(M) \simeq \vee S^{n-2}$ and we call its top homology the split Steinberg module or Charney module $$\Tilde{St}(M):=\widetilde H_{n-2}(\widetilde T(M)).$$ There is a natural map $$\Tilde{St}(M) \to St(M) $$ induced by forgetting a complement. Randal-Williams \cite[Theorem 3.3]{ran} showed this map is surjective if $rank(M) \leq 4$ and asked if surjectivity holds for all finitely-generated $M$ \cite[Remark 3.4]{ran}. We answer this question in the affirmative. 

\begin{theorem} \label{main}
    Let $\Lambda$ be a Dedekind domain and let $M$ be a finitely-generated projective $\Lambda$-module. The map $(P,Q) \mapsto Q$ induces a surjective homomorphism $\Tilde{St}(M) \to St(M)$.
\end{theorem}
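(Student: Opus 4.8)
The plan is to induct on the rank $n$ of $M$, peeling off a rank‑$1$ summand at each stage. Write $f\colon \widetilde T(M)\to T(M)$, $(P,Q)\mapsto Q$, for the poset map inducing the map of the theorem. The cases $n\le 2$ are immediate: for $n\le 1$ both modules are $\Z$ or $0$, and for $n=2$ the two buildings are discrete and $f$ sends $[(P,Q)]-[(P',Q')]$ to $[Q]-[Q']$, which is onto because every rank‑$1$ summand has a complement. So let $n\ge 3$, assume the theorem in ranks $<n$, fix a rank‑$1$ summand $\ell\subseteq M$ with complement $N$, and note $M/\ell\cong N$ has rank $n-1$. Now set up two compatible Mayer--Vietoris decompositions. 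In $T(M)$ let $A$ be the full subcomplex spanned by the summands $\neq\ell$; then $T(M)=A\cup\operatorname{st}(\ell)$ with $\operatorname{st}(\ell)=\{\ell\}\ast\operatorname{lk}(\ell)$ contractible and $A\cap\operatorname{st}(\ell)=\operatorname{lk}(\ell)=T(M/\ell)\simeq\vee S^{n-3}$. In $\widetilde T(M)$ let $\widetilde B$ be the full subcomplex on the pairs $(P,Q)$ with $Q\neq\ell$, and let $\widetilde C=\bigcup_{P}\operatorname{st}\!\big((P,\ell)\big)$, the union over all complements $P$ of $\ell$; one checks $\widetilde T(M)=\widetilde B\cup\widetilde C$, $f(\widetilde B)\subseteq A$, and $f(\widetilde C)\subseteq\operatorname{st}(\ell)$.

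The key local computation is that the link of a vertex $(P,Q)$ of $\widetilde T(M)$ equals $\widetilde T(P)\ast\widetilde T(Q)$, so (since $\widetilde T(\ell)=\varnothing$) each $\operatorname{st}\!\big((P,\ell)\big)$ is a cone on $\widetilde T(P)\simeq\vee S^{n-3}$, hence contractible, and $\widetilde B\cap\widetilde C$ is precisely the full subcomplex of $\widetilde T(M)$ on the pairs $(P,Q)$ with $\ell\subsetneq Q\subsetneq M$. Moreover $(P,Q)\mapsto\big((P+\ell)/\ell,\ Q/\ell\big)$ is a poset map $\widetilde B\cap\widetilde C\to\widetilde T(M/\ell)$ which, by Quillen's Theorem A (the relevant fibers being contractible), is a homotopy equivalence, and it intertwines $f|_{\widetilde B\cap\widetilde C}$ with the forgetful map $\widetilde T(M/\ell)\to T(M/\ell)$.

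Granting the connectivity facts that $A$ and $\widetilde B$ are $(n-3)$-connected and $\widetilde C$ is contractible --- which I would establish by a combinatorial Morse/nerve argument on the posets of summands and pairs, in the style of standard Steinberg‑module connectivity results, and which amounts to the surjectivity of the relevant ``forgetting'' maps --- Mayer--Vietoris yields a commuting ladder of short exact sequences
\[
\begin{array}{ccccccccc}
0 &\to& \widetilde H_{n-2}(\widetilde B) &\to& \widetilde{St}(M) &\to& \widetilde{St}(M/\ell) &\to& 0\\
&& \downarrow && \downarrow && \downarrow && \\
0 &\to& \widetilde H_{n-2}(A) &\to& St(M) &\to& St(M/\ell) &\to& 0,
\end{array}
\]
in which the middle vertical arrow is the map of the theorem and the right-hand one is, via the identification above, the surjection $\widetilde{St}(M/\ell)\twoheadrightarrow St(M/\ell)$ furnished by the inductive hypothesis. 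A short diagram chase then shows that the middle map is surjective as soon as the left-hand vertical map $g\colon\widetilde H_{n-2}(\widetilde B)\to\widetilde H_{n-2}(A)$ is surjective.

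This last point is the heart of the matter and the place I expect the real difficulty to lie; it cannot be avoided, since $\widetilde H_{n-2}(A)\cong\ker\!\big(St(M)\to St(M/\ell)\big)$ is in general nonzero. The claim for $g$ is again of the same ``split building surjects onto building'' shape, now for the truncated complexes $\widetilde B$ and $A$, and I would attack it by iterating the construction: delete a second rank‑$1$ summand $\ell'$ from $A$ (and the pairs over $\ell'$ from $\widetilde B$) --- the links that appear are again $T(M/\ell')\simeq\vee S^{n-3}$, so the rank‑$(n-1)$ case applies --- and continue; after deleting all rank‑$1$ summands one is left with the complex of summands of rank $\ge 2$, which has dimension $<n-2$, so its top homology vanishes and the claim becomes vacuous. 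Making this limiting (necessarily transfinite) argument rigorous, or replacing it by a direct highly‑connected‑poset analysis of $A$ and $\widetilde B$ in the spirit of van der Kallen, is where the essential work is. By contrast, the naive approach of lifting apartment classes --- each decomposition $M=L_1\oplus\cdots\oplus L_n$ produces an evident $(n-2)$-sphere in $\widetilde T(M)$ mapping to the apartment class $\langle L_1,\dots,L_n\rangle$ --- only proves the theorem when such classes generate $St(M)$, which need not hold over a general Dedekind domain; this is presumably why the question was nontrivial and why Randal‑Williams could settle only the low‑rank cases directly.
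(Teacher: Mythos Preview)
Your outline has genuine gaps, and at least two of the claimed facts are false.

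First, $\widetilde C$ is not contractible. Each closed star $\overline{\operatorname{st}}\bigl((P,\ell)\bigr)$ is indeed a cone, but the stars for different complements $P$ overlap along their links, and their union is the full subcomplex of $\widetilde T(M)$ on the pairs $(P,Q)$ with $\ell\subseteq Q$; in the paper's notation this is $S_M(\subseteq,\supseteq\ell)$. Charney's theorem already identifies this as a wedge of $(n-2)$-spheres, not a point. (If instead you intend open stars, they are pairwise disjoint since distinct $(P,\ell)$ are incomparable, so $\widetilde C$ would be a disjoint union of contractibles --- again not contractible.)

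Second, the map $(P,Q)\mapsto\bigl((P+\ell)/\ell,\,Q/\ell\bigr)$ from $\widetilde B\cap\widetilde C=\{(P,Q):\ell\subsetneq Q\}$ to $\widetilde T(M/\ell)$ is not a homotopy equivalence. Already for $n=3$ both posets are discrete, and the fibre over $(P',Q')$ is the set of complements of $\ell$ inside the rank-$2$ preimage of $P'$, which has more than one element for any infinite $\Lambda$. So the top row of your ladder does not terminate in $\widetilde{St}(M/\ell)$, and the inductive hypothesis cannot be invoked as written.

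Third, even granting a repaired ladder, the reduction to surjectivity of $g\colon\widetilde H_{n-2}(\widetilde B)\to\widetilde H_{n-2}(A)$ is not a genuine reduction: it is another rank-$n$ statement, and your proposed transfinite iteration over all rank-$1$ summands is, as you yourself acknowledge, only a heuristic.

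The paper takes a shorter and structurally different route. Rather than Mayer--Vietoris, it applies the Church--Putman fibre criterion directly to the poset map $S_M\to T_M^{op}$, $(P,Q)\mapsto Q$. The fibre over a rank-$k$ summand $V$ is exactly $S_M(\subseteq,\supseteq V)$, so everything reduces to showing this is $(n-k-1)$-spherical for every $V$. That connectivity statement is proved by induction on $n$: fix a line $L\subseteq V$ and a hyperplane $H$ with $H\oplus L=M$, start from the subposet of pairs with $P\subseteq H$ (isomorphic to $S_H(\subseteq,\supseteq H\cap V)$, handled by induction), and then attach the remaining pairs in order of decreasing $\operatorname{rank}(P)$, checking via Charney's theorem and the inductive hypothesis that each attaching link is a sphere wedge of the correct dimension. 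Ironically, the poset whose contractibility you assumed, $S_M(\subseteq,\supseteq\ell)$, is the $k=1$ instance of precisely this theorem, and its correct homotopy type $\vee S^{n-2}$ is Charney's result.
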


 We consider the map $(P,Q) \mapsto Q$ instead of $(P,Q) \mapsto P$ because that is what appears in Randal-Williams \cite{ran}, but using that the realization of a poset and its opposite are homeomorphic it follows that the map $(P,Q) \mapsto P$ induces a surjection as well. When $\Lambda$ is Euclidean, work of Ash--Rudolph \cite{AshRudolph} implies that $St(M)$ is generated by integral apartment classes. These classes are in the image of $\Tilde{St}(M)$ so Theorem \hyperlink{main}{1.1} is straightforward for Euclidean domains. However, for many Dedekind domains, the Steinberg modules are not generated by integral apartment classes \cite{CFP,MPWY} so a different argument is required.

Steinberg modules are important objects in representation theory, duality for arithmetic groups \cite{BoSe}, and algebraic $K$-theory \cite{Quillen-Ki}. In contrast, split Steinberg modules are primarly used to study homological stability \cite{Charney,Hepworth-Edge,e2cellsIII,e2cellsIV,KMP,BMS}. 


\subsection*{Acknowledgments}
We thank Oscar Randal-Williams for helpful conversations.

\section{Definitions and known results}

In this section, we collect basic definitions and previously known results related to posets.

\begin{notation}
    Let $M$ be a finitely-generated projective module over a Dedekind domain $\Lambda$. Let $K$, $V$ be proper summands of $M$. Let $T_M$ be the poset of proper nonzero summands of $M$ ordered by inclusion. Let $S_M$ be the poset of pairs $(P,Q)$ of proper summands with $P \oplus Q = M$ with $(P,Q) < (P', Q')$ if $P \subsetneq P'$ and $Q \supsetneq Q'$. Define the following subposets of $S_M$: \begin{itemize}
        \item $S_M(\subseteq , \supseteq V) = \{(P,Q) \in S_M: Q \supseteq V\}$,
        \item $S_M(\subseteq K, \supseteq) = \{(P,Q) \in S_M: P \subseteq K\}$,
        \item $S_M(\subseteq K, \supseteq V) = \{(P,Q) \in S_M: P \subseteq K ,Q \supseteq V\}$.
    \end{itemize}
\end{notation}
Note $|T_M|  \cong T(M)$ and $|S_M| \cong S(M)$.
We will recall upper and lower links in posets.
\begin{notation}
    Let $P$ be a poset and let $a \in P$. Define the following subposets\begin{itemize}
      
        \item $Link_{P}(a) = \{b \in P: b < a \text{ or } b > a\}$,
        \item $Link_{P}^{>}(a) = \{b \in P: b > a \}$,
        \item $Link_{P}^{<}(a) = \{b \in P: b < a \}$.
    \end{itemize}
\end{notation}
Note $|Link_{P}(a)| \cong |Link_{P}^{>}(a)| * |Link_{P}^{<}(a)|$ where $*$ denotes join. The next definition is needed to state Lemma \ref{CP}.  
\begin{definition}
    Let $A$ and $B$ be posets. The height of $a \in A$ denoted $ht(a)$ is the maximal $k$ such that there exists a chain $a_0 \lneq a_1 \lneq ... \lneq a_k = a$ in $A$. Given a poset map $F: A \rightarrow B$, the poset fiber of $b \in B$ denoted $ F_{\leq b}$ is $\{a \in A :$ $F(a) \leq b\}$. 
\end{definition}

The following is Church and Putman \cite[Proposition 2.3]{CP} (see also Quillen \cite[Theorem 9.1]{Quillen-Poset}).

\begin{lemma} [\textbf{Church-Putman}] \label{CP}
    Let $A$ and $B$ be posets. Fix $m \ge 0$ and let $F: A \rightarrow B$ be a map of posets. Assume $B$ is Cohen-Macaulay of dimension $d$ and that for all $b \in B$, the poset fiber $ F_{\leq b}$ is $(ht(b) + m)$-spherical. Then $\widetilde{H}_{i}(A) \rightarrow\widetilde{H}_{i}(B)$ is an isomorphism when $i < d + m$ and surjective for $i \leq d + m$.
\end{lemma}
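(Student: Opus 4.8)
The plan is to compare $\widetilde{H}_*(A)$ with $\widetilde{H}_*(B)$ via the spectral sequence attached to the poset map $F$, exploiting the Cohen--Macaulay hypothesis twice: once for $B$ itself and once for its upper links $Link_B^{>}(b)$. For any poset map $F\colon A\to B$ there is a homotopy equivalence $|A|\simeq\operatorname{hocolim}_{b\in B}|F_{\le b}|$ --- one applies Quillen's Theorem A to the projection $\{(b,a): F(a)\le b\}\to A$, whose comma fibers are contractible (each retracting onto a copy of $A_{\le a}$) --- and this is compatible with the map $|F|\colon|A|\to|B|\simeq\operatorname{hocolim}_{b\in B}|B_{\le b}|$ induced by the poset maps $F_{\le b}\to B_{\le b}$, $a\mapsto F(a)$. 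The skeletal filtration of these homotopy colimits gives a first-quadrant homological spectral sequence $E^1_{p,q}=\bigoplus_{b_0<\dots<b_p}H_q(F_{\le b_0})\Rightarrow H_{p+q}(A)$ and a map to the analogous spectral sequence for $B$; the latter degenerates, each $B_{\le b_0}$ being contractible, and its $q=0$ row is the simplicial chain complex of $B$.

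The crux is computing the $E^2$-page for $A$. Fix $q\ge 1$. By sphericity $H_q(F_{\le b})=0$ unless $q=ht(b)+m$, and in $d^1$ the face map omitting $b_0$ induces $H_q(F_{\le b_0})\to H_q(F_{\le b_1})=0$ since $ht(b_1)>ht(b_0)=q-m$. Thus the $q$-th row of $E^1$ decomposes as a direct sum, indexed by the $b_0$ with $ht(b_0)=q-m$, of $\widetilde{H}_q(F_{\le b_0})$ tensored with the augmented, reindexed chain complex of $Link_B^{>}(b_0)$. Since $B$ is Cohen--Macaulay of dimension $d$, each $Link_B^{>}(b_0)$ is Cohen--Macaulay of dimension $d-1-ht(b_0)=d-1-(q-m)$, hence has reduced homology concentrated in that degree; therefore $E^2_{p,q}=0$ for $q\ge 1$ unless $p+q=d+m$. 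For $m=0$ one runs the same argument on the kernel of the $q=0$-row comparison $\bigoplus H_0(F_{\le b_0})\to C_*(B)$ --- this kernel is supported on the minimal $b_0$, where $\widetilde{H}_0(F_{\le b_0})$ can be nonzero, and its homology is concentrated in degree $d$ --- to conclude that the comparison is a homology isomorphism below degree $d$ and a surjection in degree $d$ (for $m\ge 1$ it is already an isomorphism of complexes).

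Granting this, the rest is formal. The $E^2$-page for $A$ has $q=0$ row whose homology maps isomorphically (resp.\ surjectively in top degree) to $H_*(B)$ --- which, $B$ being Cohen--Macaulay of dimension $d$, vanishes outside degrees $0$ and $d$ --- and all higher rows concentrated on the single antidiagonal $p+q=d+m$. Consequently, in total degree $\le d+m$ no higher differential can enter or leave the bottom row or that antidiagonal, so the relevant entries survive to $E^\infty$, and tracking edge homomorphisms through the map of spectral sequences gives that $H_n(A)\to H_n(B)$ is an isomorphism for $n<d+m$ and a surjection for $n=d+m$. In particular $H_0(A)\cong H_0(B)\cong\Z$ when $d\ge 1$, so both posets are connected and the claim follows in reduced homology; the remaining case $\dim B=0$, in which $B$ is a discrete set and $A=\coprod_{b}F_{\le b}$ is a disjoint union of its $m$-spherical fibers, is disposed of by inspection.

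The step I expect to be the main obstacle is purely organizational: one must pin down the variance in the spectral sequence so that over a chain $b_0<\dots<b_p$ it is the fiber over the \emph{smallest} element $b_0$ that contributes --- this is exactly what makes the vanishing of the $d^1$-face and the link Cohen--Macaulay computation fit together --- and one must carefully match the bottom row with $\widetilde{H}_*(B)$ (the $m=0$ and degree-zero cases being the finicky ones) and check that the edge homomorphism in question really recovers $|F|_*$.
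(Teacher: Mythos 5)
The paper does not actually prove this lemma --- it is quoted from Church--Putman \cite[Proposition 2.3]{CP} and Quillen \cite[Theorem 9.1]{Quillen-Poset} --- and your argument is a correct reconstruction of essentially the spectral-sequence proof given in those sources: the Grothendieck-construction/Bousfield--Kan spectral sequence of the poset map, with sphericity of the fibers killing the $d^1$-component that forgets $b_0$ and Cohen--Macaulayness of the upper links $Link_B^{>}(b_0)$ (of dimension $d-1-ht(b_0)$) forcing the rows $q\ge 1$ of $E^2$ onto the antidiagonal $p+q=d+m$, after which the edge-homomorphism comparison with the degenerate spectral sequence for $B$ gives the claim. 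The only points that deserve an explicit word are routine: the universal-coefficient Tor terms in your row decomposition vanish because the top homology of a spherical link is free, and the $q=0$ and reduced-homology bookkeeping that you already flag works out exactly as you describe.
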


In particular, the case when $m = 0$ will give surjectivity of the induced map $\Tilde{St}$(M) $\rightarrow St(M)$. \newline

The following is a well-known result from Combinatorial Morse Theory. It is implicit in Charney \cite[Proof of Theorem 1.1]{Charney}; see also Bestvina \cite{Bestvina-MorseTheory}.
\begin{lemma} \label{2.4}
    Let $X$ be a poset. Let $Y \subseteq X$ and $V = X  - Y$. Assume for all distinct $v_1,v_2 \in V$ that $v_1$ and $v_2$ are not comparable. Then
    $$|X| \simeq |Y| \cup \underset{v \in V}{\bigcup} Cone \Big(|Link(v) \cap Y|\Big).$$ In particular, if $|Y| \simeq \vee S^{n}$ and $|Link(v) \cap Y| \simeq \vee S^{n-1}$ for all $v \in V$, then $|X| \simeq \vee S^{n}$.
\end{lemma}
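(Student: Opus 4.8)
The plan is to realize $|X|$ explicitly as the adjunction space on the right-hand side, by cutting it open along the closed stars of the vertices lying in $V$. Here $Link(v)$ means $Link_X(v)$, and the starting point is that the incomparability hypothesis forces $Link_X(v)\subseteq Y$ for every $v\in V$: any $b\in Link_X(v)$ is comparable to $v$, hence $b\notin V$, hence $b\in Y$; so in fact $Link(v)\cap Y=Link_X(v)$. The same hypothesis shows that a chain of $X$ contains at most one element of $V$, since two distinct elements of $V$ are never comparable.

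Next I would use the standard description of the closed star $\overline{\mathrm{st}}(v)$ of a vertex $v$ in an order complex, namely that the simplices of $|X|$ containing $v$ are exactly the chains $\{v\}\cup\sigma$ with $\sigma$ a (possibly empty) chain in $Link_X(v)$; thus $\overline{\mathrm{st}}(v)=\{v\}*|Link_X(v)|$ is a copy of $Cone(|Link(v)\cap Y|)$ whose base $|Link(v)\cap Y|$ sits inside $|Y|$. Then I would verify three combinatorial facts: (i) $|X|=|Y|\cup\bigcup_{v\in V}\overline{\mathrm{st}}(v)$, because each chain of $X$ either lies in $Y$ or contains its unique element $v\in V$ and hence lies in $\overline{\mathrm{st}}(v)$; (ii) $\overline{\mathrm{st}}(v)\cap|Y|=|Link(v)\cap Y|$, because a chain in both avoids $v$ and therefore lies in $Link_X(v)$; and (iii) for distinct $v_1,v_2\in V$, $\overline{\mathrm{st}}(v_1)\cap\overline{\mathrm{st}}(v_2)\subseteq|Y|$, because a common chain cannot contain $v_1$ (otherwise it would fail to be a chain in $Link_X(v_2)$, as $v_1$ and $v_2$ are incomparable) nor $v_2$, so it lies in $Link_X(v_1)\cap Link_X(v_2)\subseteq Y$. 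Facts (i)–(iii) say precisely that $|X|$ is the colimit of the gluing diagram built from $|Y|$ and the $\overline{\mathrm{st}}(v)\cong Cone(|Link(v)\cap Y|)$ along the inclusions $|Link(v)\cap Y|\hookrightarrow|Y|$ — indeed $|X|$ carries the weak topology with respect to the covering subcomplexes $|Y|$ and $\overline{\mathrm{st}}(v)$ — so $|X|$ is homeomorphic to the adjunction space $|Y|\cup\bigcup_{v\in V}Cone(|Link(v)\cap Y|)$, which proves the first assertion (indeed with homeomorphism, not merely homotopy equivalence).

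For the "in particular", assume $|Y|\simeq\bigvee S^n$ and $|Link(v)\cap Y|\simeq\bigvee S^{n-1}$ for all $v\in V$; I will treat $n\ge 2$ and note that $n\le 1$, where $|X|$ is homotopy equivalent to a graph or a discrete set, is elementary. Collapsing $|Y|$ in the decomposition above gives $|X|/|Y|=\bigvee_{v\in V}\overline{\mathrm{st}}(v)/|Link(v)\cap Y|=\bigvee_{v\in V}\Sigma(|Link(v)\cap Y|)$, a wedge of suspensions of wedges of $(n-1)$-spheres, hence a wedge of $n$-spheres. Feeding $H_*(|X|,|Y|)\cong\widetilde H_*(|X|/|Y|)$ into the long exact sequence of the CW pair $(|X|,|Y|)$ forces $\widetilde H_i(|X|)=0$ for $i\ne n$ and $\widetilde H_n(|X|)$ free, the point being that the connecting map out of $H_n(|X|,|Y|)$ lands in $\widetilde H_{n-1}(|Y|)=0$. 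It remains to see $|X|$ is simply connected: attach the cones $\overline{\mathrm{st}}(v)$ to $|Y|$ one at a time (passing to the colimit over finite subsets of $V$ if $V$ is infinite) and apply van Kampen at each step — by (ii)–(iii) the relevant intersection is the connected space $|Link(v)\cap Y|$ while $\overline{\mathrm{st}}(v)$ is contractible, so $\pi_1$ is only ever quotiented and stays trivial. Then Hurewicz together with Whitehead's theorem identify $|X|$ with $\bigvee S^n$.

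The only genuine work is in the second paragraph: the closed stars $\overline{\mathrm{st}}(v)$ need not be pairwise disjoint — they can share faces inside $|Y|$ — so one must check carefully that they meet one another, and $|Y|$, in exactly the prescribed links, and this is the one place where pairwise incomparability of $V$ is essential. Everything afterward is a formal consequence of the cofiber sequence for coning off a subcomplex.
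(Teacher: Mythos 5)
Your argument is correct and is precisely the combinatorial-Morse-theory decomposition that the paper itself leaves to its references (Charney, Bestvina): Lemma \ref{2.4} is stated there without proof, and your closed-star/adjunction-space identification $|X|=|Y|\cup\bigcup_v \overline{\mathrm{st}}(v)$ with $\overline{\mathrm{st}}(v)\cong Cone(|Link(v)\cap Y|)$, followed by the long exact sequence, van Kampen, Hurewicz and Whitehead, is the standard way to establish both assertions. The one soft spot is dismissing $n\le 1$ as elementary: the case $n=1$, where the links are only $\vee S^{0}$ and hence disconnected, is genuinely used in the paper (Claim 4 with $n-k-1=1$) and needs the groupoid version of van Kampen to see that $\pi_1$ stays free, together with the fact that a complex with free fundamental group and no homology above degree $1$ is aspherical, hence a wedge of circles --- routine, but worth writing down rather than waving off.
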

The following is the Solomon-Tits Theorem; see Quillen \cite[Example 8.2]{Quillen-Poset} in the case where $\Lambda$ is a field and see \cite[Remark 2.8]{MPWY} to reduce the Dedekind domain case to the field case. 
\begin{theorem}[\textbf {Solomon-Tits}] \label{sol}
    Let $M$ be a rank-$n$ projective module over a Dedekind domain $\Lambda$. $T_M$ is Cohen-Macaulay of dimension $n-2$.
\end{theorem}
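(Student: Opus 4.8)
The plan is to first recall the definition of Cohen--Macaulayness and then reduce the statement to the classical case of a vector space over a field, which is where the genuine geometric input (Solomon--Tits, in the form cited as Quillen, Example 8.2) lives. Recall that a $d$-dimensional poset $Z$ is \emph{Cohen--Macaulay of dimension $d$} if $|Z| \simeq \vee S^{d}$ and, for every chain $\sigma = (z_0 < z_1 < \cdots < z_p)$ in $Z$, the realization of $\bigcap_{i=0}^{p} Link_Z(z_i)$ is $\simeq \vee S^{d-p-1}$, with the convention that the empty complex is the $(-1)$-sphere (an empty wedge of $(-1)$-spheres). So for $d = n-2$ I would need to check the top-dimensional sphericity of $|T_M|$ together with the sphericity of every such link.

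The first and main step is a reduction to a field. Let $K = \mathrm{Frac}(\Lambda)$ and $V = M \otimes_\Lambda K$, an $n$-dimensional $K$-vector space containing $M$. I claim that $P \mapsto PK$, the $K$-span of $P$ inside $V$, is an isomorphism of posets from $T_M$ onto the poset $T(V)$ of proper nonzero $K$-subspaces of $V$, with inverse $W \mapsto W \cap M$. Verifying this is exactly where the Dedekind hypothesis enters: if $P$ is a nonzero proper summand of $M$, then $M/P$ is projective, hence torsion-free, so $P$ is saturated in $M$, $PK \cap M = P$, and $\dim_K PK = \mathrm{rank}_\Lambda P \in (0,n)$, making $PK$ a proper nonzero subspace; conversely, for a proper nonzero subspace $W \subseteq V$, the module $W \cap M$ is finitely generated (as $\Lambda$ is Noetherian) and torsion-free, hence projective, and $M/(W\cap M)$ embeds into the torsion-free module $V/W$, so $W \cap M$ is a summand of $M$, which is nonzero and proper since $(W\cap M)K = W$. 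Both assignments are order-preserving and mutually inverse, so $T_M \cong T(V)$ as posets. Since Cohen--Macaulayness depends only on the poset structure, it suffices to treat $T(V)$.

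I would then handle the field case by induction on $n = \dim_K V$, the base case $n=1$ being $T(V) = \emptyset$, correctly $(-1)$-spherical. The top-dimensional sphericity $|T(V)| \simeq \vee S^{n-2}$ is the classical Solomon--Tits theorem, which I would cite (Quillen, Example 8.2), or reprove via the standard apartment/combinatorial-Morse-theory argument. For the link condition, a chain $\sigma = (W_0 \subsetneq \cdots \subsetneq W_p)$ of proper nonzero subspaces has link equal to the poset of subspaces comparable to every $W_i$, i.e. those strictly between consecutive terms of the flag $0 \subsetneq W_0 \subsetneq \cdots \subsetneq W_p \subsetneq V$; this yields a natural poset isomorphism
$$\bigcap_{i=0}^{p} Link_{T(V)}(W_i) \;\cong\; T(W_0) * T(W_1/W_0) * \cdots * T(W_p/W_{p-1}) * T(V/W_p),$$
a join of $p+2$ Tits buildings of $K$-vector spaces of dimensions $r_0,\dots,r_{p+1}$ with $\sum_\ell r_\ell = n$. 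By the inductive hypothesis each factor is $(r_\ell-2)$-spherical, and a join of $p+2$ wedges of spheres of dimensions $a_0,\dots,a_{p+1}$ is a wedge of spheres of dimension $\sum_\ell a_\ell + (p+1)$; here $\sum_\ell (r_\ell-2) + (p+1) = n - 2(p+2) + (p+1) = (n-2) - p - 1$, exactly the required dimension. The only subtlety I anticipate is bookkeeping with degenerate factors: when some $r_\ell = 1$ the factor is the empty complex, joining with which leaves the join unchanged while the dimension count still works out since $\emptyset = S^{-1}$ contributes $a_\ell = -1 = r_\ell - 2$. The genuinely new content is the poset isomorphism $T_M \cong T(V)$ of the second paragraph; the field case is the classical Solomon--Tits theory for buildings, and the ``hard part'' is simply getting that reduction and the join conventions right.
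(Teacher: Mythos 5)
The paper gives no proof of this theorem, only citations: it reduces the Dedekind case to the field case via exactly the saturation/poset-isomorphism argument you give (attributed to \cite[Remark 2.8]{MPWY}) and then invokes the classical Solomon--Tits theorem for vector spaces (Quillen, Example 8.2). Your proposal fills in the details of precisely that strategy, and both the reduction $T_M \cong T(V)$ and the join decomposition of links with its dimension count are correct.
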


The following due to Charney \cite[Theorem 1.1]{Charney}. 
In particular, we will need $|S_M(\subseteq H, \supseteq)| \simeq \vee S^{n-k-1}$.

\begin{theorem}[\textbf{Charney}] \label{2.5}
    Let $\Lambda$ be a Dedekind domain. Let $M$ be a rank-$n$ projective $\Lambda$-module. Let $H$ and $L$ be rank-$(n-1)$ and rank-$1$ summands of $M$ respectively. Then $|S_M|$, $|S_M(\subseteq, \supseteq L)|$, and $|S_M(\subseteq H, \supseteq)|$ are each homotopy equivalent to a wedge (over possibly different indexing sets) of spheres of dimension $n-2$. 
\end{theorem}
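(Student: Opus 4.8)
The plan is to prove all of these statements simultaneously by induction on $n=\mathrm{rank}(M)$, with the combinatorial Morse theory Lemma~\ref{2.4} as the engine. To make the induction close it is convenient to prove a slightly stronger statement: for every finitely-generated projective $\Lambda$-module $M$ of rank at most $n$, every proper nonzero summand $K\subsetneq M$, and every summand $V\subseteq K$ (allowing $V=0$), the posets $S_M$ and $S_M(\subseteq K,\supseteq V)$ are homotopy equivalent to wedges of spheres of dimensions $n-2$ and $\mathrm{rank}(K)-\mathrm{rank}(V)-1$ respectively, under the usual convention that an empty wedge (here a ``wedge of $(-1)$-spheres'') is the empty set. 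The base cases $n\le 2$ are immediate: for $n=1$ all these posets are empty, and for $n=2$ each is empty or a nonempty finite antichain, in either case agreeing with the claim --- nonemptiness, when it holds, coming from Steinitz's theorem, which produces a rank-one splitting of $M$.

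The computation powering the induction is that links in these posets are joins of smaller split Tits buildings, by the modular law. If $(P,Q)\in S_M$ and $(P',Q')<(P,Q)$, then $P=P'\oplus(P\cap Q')$ and $Q'=(P\cap Q')\oplus Q$, and the assignment $(P',Q')\mapsto(P',\,P\cap Q')$ identifies
\[
  \mathrm{Link}^{<}_{S_M}(P,Q)\;\cong\;S_{P},\qquad
  \mathrm{Link}^{>}_{S_M}(P,Q)\;\cong\;S_{Q}
\]
as posets (the second by the symmetric manipulation). Performing the same reparametrization inside the restricted poset shows $\mathrm{Link}^{<}_{S_M(\subseteq K,\supseteq V)}(P,Q)\cong S_P$ and $\mathrm{Link}^{>}_{S_M(\subseteq K,\supseteq V)}(P,Q)\cong S_Q(\subseteq K\cap Q,\,\supseteq V)$, where $V\subseteq K\cap Q$ since $V\subseteq K$ and $V\subseteq Q$. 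Because $|\mathrm{Link}_{S_M}(P,Q)|\cong|\mathrm{Link}^{<}_{S_M}(P,Q)|\ast|\mathrm{Link}^{>}_{S_M}(P,Q)|$ and $\bigl(\bigvee S^{a}\bigr)\ast\bigl(\bigvee S^{b}\bigr)\simeq\bigvee S^{a+b+1}$, applying the inductive hypothesis to $S_P$, $S_Q$, and $S_Q(\subseteq K\cap Q,\supseteq V)$ --- all of strictly smaller rank, since $P$ and $Q$ are proper summands --- shows that every vertex link is a wedge of spheres exactly one dimension below the target. The ordinary Tits buildings that surface when this bookkeeping is iterated are handled by Solomon--Tits (Theorem~\ref{sol}).

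To build each poset out of these links I would invoke Lemma~\ref{2.4} repeatedly. At each stage one removes a suitable antichain $V$ of vertices from the current poset $X$ --- antichains are cheap here, since two splittings whose first coordinates have equal rank are automatically incomparable in $S_M$ --- leaving a residual subposet $Y$ whose homotopy type is either of strictly smaller complexity (a further instance of the induction: e.g.\ a secondary induction on $\mathrm{rank}(K)$ for the posets $S_M(\subseteq K,\supseteq V)$, or on $n$ for $S_M$ after restricting to a corank-one summand) or is pinned down by comparison with an ordinary Tits building through Lemma~\ref{CP} and Theorem~\ref{sol}. Since the links $\mathrm{Link}_X(v)\cap Y$ are wedges of spheres one dimension below $Y$, the ``in particular'' clause of Lemma~\ref{2.4} propagates the conclusion from $Y$ to $X$, and after finitely many steps one obtains the desired homotopy type.

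The main obstacle --- and essentially all of the work --- is organizing this web of inductions: one must choose, at each stage, the antichain $V$ and the residual poset $Y$ so that $Y$ is again one of the posets controlled by the inductive hypothesis, and one must verify the link identifications above. The reparametrizations look routine but are sensitive to the fact that a summand typically has many complements, so global maps such as $(P,Q)\mapsto(P,\,Q\cap K)$ fail to be injective and the identifications must be made at the level of links rather than of whole posets. Identifying the precise package of auxiliary statements that is simultaneously strong enough to feed these link computations and weak enough to be provable by this scheme is the delicate point; once it is fixed, each individual use of Lemmas~\ref{2.4} and~\ref{CP} and Theorem~\ref{sol} is routine. (One could instead compare $S_M$ directly with $T_M$ via $(P,Q)\mapsto Q$ and apply Lemma~\ref{CP}, but this only yields information about the homology of $S_M$; upgrading to the homotopy statement --- a genuine wedge of spheres --- seems to need either the Morse-theoretic route via Lemma~\ref{2.4} or a separate simple-connectivity argument.)
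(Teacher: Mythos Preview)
The paper does not prove Theorem~\ref{2.5}: it is quoted from Charney and then used as a black box in the proof of Theorem~\ref{3.1}. So there is no in-paper argument to compare against. Your outline is nonetheless in the spirit both of Charney's original proof and of the paper's own Theorem~\ref{3.1}: induct on rank, fix a corank-one summand, filter the poset by how the first coordinate sits relative to it, identify links as joins of smaller split buildings, and feed everything through Lemma~\ref{2.4}.

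Two concrete gaps remain. First, your strengthened inductive package requires $K\subsetneq M$ proper, so as stated it does not contain $S_M(\subseteq,\supseteq L)$ at all; you would need to allow $K=M$ with $V\neq 0$ (your dimension formula $\mathrm{rank}(K)-\mathrm{rank}(V)-1$ is correct there, but fails by one when $K=M$ and $V=0$, which is why $S_M$ has to be carried separately). Second, and more seriously, you explicitly defer the choice of antichain and residual subposet, calling it ``the delicate point''. That choice \emph{is} the proof. In Charney's argument, and in the paper's proof of Theorem~\ref{3.1}, one fixes $H$ of corank one and a line $L$ with $H\oplus L=M$, takes $X_0=S_M(\subseteq H,\supseteq V)$, identifies $X_0\cong S_H(\subseteq,\supseteq H\cap V)$ via $(P,Q)\mapsto(P,Q\cap H)$ (this is where a global map does work, contrary to your worry), and then reattaches the vertices with $P\not\subseteq H$ one rank at a time, computing each descending link as an $S_A(\subseteq H\cap A,\supseteq)$ and each ascending link as an $S_B(\subseteq,\supseteq V)$. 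Until you commit to a specific filtration of this kind and verify that every $Y$ lands back in your inductive class, what you have is a plan rather than a proof.
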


\section{Main Result}

In this section, we prove Theorem \ref{main}, which states that the map $(P,Q) \mapsto Q$ induces a surjective homomorphism $\Tilde{St}(M) \rightarrow St(M)$ for $M$ a finitely-generated projective module over a Dedekind domain. The bulk of this section will be spent proving $S_M(\subseteq,\supseteq V)$ is highly-connected. This will let us apply Lemma \ref{CP} for $m = 0$ to deduce $\Tilde St(M)$ surjects onto $St(M)$.

\begin{theorem} \label{3.1}
    Let $M$ be a rank-$n$ projective module over a Dedekind domain $\Lambda$ and let $V$ be a proper nonzero rank-$k$ summand of $M$. Then $S_M(\subseteq, \supseteq V) \simeq \vee S^{n-k-1}$.
\end{theorem}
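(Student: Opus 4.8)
The plan is to induct on the rank $k$ of $V$, using the combinatorial Morse theory of Lemma \ref{2.4} together with Charney's Theorem \ref{2.5} as the base and as the source of highly-connected links. When $k = n-1$, the condition $Q \supseteq V$ forces $Q = V$ (since $Q$ is a proper summand containing the rank-$(n-1)$ summand $V$, and a proper summand of strictly larger rank is impossible), so $S_M(\subseteq, \supseteq V)$ is the poset of rank-$1$ summands $P$ with $P \oplus V = M$; this is a discrete set, a wedge of $S^0$'s, matching $\vee S^{n-k-1} = \vee S^0$. For the inductive step, suppose the result holds for all summands of rank $> k$ (inside any ambient module), and let $V$ have rank $k < n-1$.

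The idea is to build $S_M(\subseteq, \supseteq V)$ up from $S_M(\subseteq, \supseteq W)$ for a well-chosen rank-$(k+1)$ summand $W \supseteq V$, by attaching the vertices $(P,Q)$ with $Q \not\supseteq W$ but $Q \supseteq V$. First I would pick a rank-$1$ summand $L$ complementary to $V$ inside some rank-$(k+1)$ summand $W = V \oplus L$; alternatively, and more in the spirit of Lemma \ref{2.4}, I would take $Y = S_M(\subseteq, \supseteq W)$ as a subposet of $X = S_M(\subseteq, \supseteq V)$ and let $V_{\mathrm{new}} = X - Y$. One must check that distinct elements of $X - Y$ are incomparable — this requires choosing the stratification carefully, perhaps by Morse-theoretic height, rather than crudely by "does $Q$ contain $W$"; a cleaner route is to filter by the submodule $Q \cap W$ or by $\mathrm{rank}(Q)$. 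For each new vertex $v = (P,Q)$ one must identify $|Link(v) \cap Y|$ and show it is $\vee S^{n-k-2}$, so that Lemma \ref{2.4} upgrades $|Y| \simeq \vee S^{n-k-2}$ (known by induction, since $W$ has rank $k+1$) to $|X| \simeq \vee S^{n-k-1}$. The link computation should itself reduce, via the join decomposition $|Link_P(a)| \cong |Link^{>}_P(a)| * |Link^{<}_P(a)|$, to a product of a Steinberg-type building for $P$ (contributing $S_M(\subseteq P, \supseteq)$-style factors, handled by Theorem \ref{2.5}) and a split-Tits building for $M/(\text{something})$ of smaller rank (handled by the inductive hypothesis or again by Theorem \ref{2.5}).

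Concretely, I expect the cleanest implementation to fix a rank-$1$ summand $L$ with $V \oplus L$ a summand, set $W = V \oplus L$, and take $Y = S_M(\subseteq, \supseteq W)$, $V_{\mathrm{new}} = \{(P,Q) \in S_M(\subseteq, \supseteq V) : Q \not\supseteq W\}$. For $(P,Q) \in V_{\mathrm{new}}$ we have $Q \supseteq V$ and $Q + L = Q + W$ is a summand of rank $\mathrm{rank}(Q)+1$ not equal to $Q$; incomparability of two such vertices should follow because if $(P,Q) < (P',Q')$ with both in $V_{\mathrm{new}}$ then $Q' \subsetneq Q$, $Q' \supseteq V$, and one can arrange that $Q \supseteq W$ forces itself — this is exactly the delicate point and may require replacing the naive $V_{\mathrm{new}}$ by an antichain obtained from a Morse function, as in Charney's original argument. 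The link $Link(v) \cap Y$ of $v = (P,Q)$ consists of $(P',Q') \in S_M$ with $W \subseteq Q' $ and either $(P',Q') < (P,Q)$ or $(P',Q') > (P,Q)$; elements below have $P' \subseteq P$ and $Q' \supseteq Q \supseteq W$ (so $Q' = Q$ is forced when $Q \supseteq W$, but here $Q \not\supseteq W$, so "below" is empty unless $W \subseteq Q$ — meaning the link is entirely an upper link), while elements above have $P' \supseteq P$, $Q' \subseteq Q$, $Q' \supseteq W$; this upper link is then a split-Tits building of the rank-$(\mathrm{rank}(Q) - k - 1)$ module $Q/W$ joined with a contractible or Steinberg factor coming from $P$, and Theorem \ref{2.5} or the inductive hypothesis identifies it as $\vee S^{n-k-2}$.

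**Main obstacle.** The hard part will be verifying the incomparability hypothesis of Lemma \ref{2.4} for the set of attached vertices and simultaneously getting the link homotopy type exactly right — i.e., choosing the stratification of $S_M(\subseteq, \supseteq V) - S_M(\subseteq, \supseteq W)$ so that it is a genuine antichain and every link is a wedge of spheres of the correct dimension $n-k-2$. This is precisely where Charney's proof of Theorem \ref{2.5} was itself intricate, and I expect the argument here to parallel hers closely, with the rank-$k$ constraint $Q \supseteq V$ playing the role her argument plays for the full split building; the bookkeeping of which links are joins of a Tits building (Theorem \ref{sol}) with a split building (Theorem \ref{2.5}) of the appropriate smaller ranks is the crux.
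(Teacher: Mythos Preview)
There is a genuine dimension error in your plan. You take the base of the filtration to be $Y = S_M(\subseteq,\supseteq W)$ with $W \supset V$ of rank $k+1$; by the inductive hypothesis this is $\vee S^{n-(k+1)-1} = \vee S^{n-k-2}$, one dimension \emph{too low}. Lemma~\ref{2.4} does not raise dimension: it says that if $|Y|\simeq \vee S^m$ and each $|Link(v)\cap Y|\simeq \vee S^{m-1}$ then $|X|\simeq \vee S^m$, the \emph{same} $m$. So even if you could arrange $X\setminus Y$ to be an antichain and each link to be $\vee S^{n-k-2}$, the lemma would only yield $|X|\simeq \vee S^{n-k-2}$, not $\vee S^{n-k-1}$. (Your link analysis also has the direction reversed: for $(P,Q)\in X\setminus Y$ the \emph{upper} link in $Y$ is empty, since $Q'\subsetneq Q$ and $Q'\supseteq W$ would force $Q\supseteq W$; it is the \emph{lower} link that can be nonempty.)

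What makes the paper's proof work is a different choice of base that already sits in the correct dimension. One picks a rank-$1$ summand $L\subseteq V$ and a rank-$(n-1)$ summand $H$ with $H\oplus L=M$, and sets $X_0 = S_M(\subseteq H,\supseteq V)$. The point is that $X_0 \cong S_H(\subseteq,\supseteq H\cap V)$ via $(P,Q)\mapsto (P,Q\cap H)$, and since $\mathrm{rank}(H)=n-1$ and $\mathrm{rank}(H\cap V)=k-1$, induction on $n$ gives $|X_0|\simeq \vee S^{(n-1)-(k-1)-1}=\vee S^{n-k-1}$, the \emph{right} dimension. The remaining vertices are stratified as $T_i=\{(A,B)\in X:\mathrm{rank}(A)=n-k-1-i,\ A\not\subseteq H\}$, which \emph{are} antichains (fixed rank), and for $(A,B)\in T_i$ one computes $Link^{>}\cap X_{i-1}\cong S_B(\subseteq,\supseteq V)\simeq \vee S^{i}$ (induction on $n$) and $Link^{<}\cap X_{i-1}\cong S_A(\subseteq H\cap A,\supseteq)\simeq \vee S^{n-k-i-3}$ (Charney, Theorem~\ref{2.5}); their join is $\vee S^{n-k-2}$, so Lemma~\ref{2.4} propagates $\vee S^{n-k-1}$ through the filtration. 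The moral: shrink $M$ and $V$ together (via $H$ and $H\cap V$), rather than enlarging $V$ alone.
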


\begin{proof}
    The theorem is vacuously true for $n = 0$ and $n = 1$ as $M$ has no proper summands. Theorem \ref{2.5} proven by Charney gives the case when $n=2$ and $n=3$. Randal-Williams has proven the case when $n=4$; see \cite[Theorem 3.3]{ran}. \newline
    
    As noted above, the base case is vacuous. By induction we will assume that the theorem holds for modules $M$ with rank($M$)  $<n$. Now fix $M$ to be a rank-$n$ projective module over a Dedekind domain $\Lambda$ and let $V$ be a proper nonzero rank-$k$ summand of $M$. The result is true when $k = 1$ by Theorem \ref{2.5}, so assume $k \geq 2$.

    The following filtration will be used to show $|S_M(\subseteq, \supseteq{V})|  \simeq \vee S^{n-k-1}$. Choose $L$ a rank-$1$ summand of $V$ and  $H$ a rank-$(n-1)$ summand of $M$ with $H \oplus L = M$ (such an $L$ and $H$ always exist and this implies $V\ \cancel{\subseteq}\ H$). We define the following: \begin{itemize}
        \item $X := S_M(\subseteq, \supseteq{V})$,
        \item $X_0 := S_M(\subseteq{H}, \supseteq{V}) \subsetneq X$,
        \item $T_i := \{(A,B)\in X: rank(A) = n-k-1-i, A\ \cancel{\subseteq}\  H\}$,
        \item $X_i = X_0$ $\cup$ $T_0$ $\cup$ ... $\cup$ $T_{i} \subseteq X.$
        
    \end{itemize}
Note $X = X_{n-k-2}$. \newline

\noindent {\bf Claim 1: $|X_0| \simeq \vee S^{n-k-1}$} \newline 

The maps $(P,Q) \mapsto (P,Q \cap H)$ and $(Z, W) \mapsto (Z, W \oplus L)$ for $(P,Q) \in X_0$ and $(Z,W) \in S_{H}(\subseteq, \supseteq{H \cap V})$ give isomorphisms $$X_0 = \{(P,Q): P \subseteq H, Q \supseteq V\}\ \overset{\cong}{\longleftrightarrow} \{ (Z,W): Z \oplus W = H, W \supseteq H \cap V\} = S_{H}(\subseteq, \supseteq{H \cap V}).$$

\noindent In order for this isomorphism to make sense we must have $P \neq H$, as otherwise $(P,Q \cap H)$ is not an element of $S_{H}(\subseteq, \supseteq H \cap V)$, but this is indeed the case as $k \geq 2$ so $rank(P) \leq n-2  < n-1 = rank(H).$ \newline

Recall $V\ \cancel{\subseteq}\ H$ and $rank(H) < rank(M) = n$, so $rank(H \cap V) = k-1 > 0$, hence $$|X_0| \cong \newline |S_{H}(\subseteq, \supseteq{H \cap V})| \simeq \vee{S^{n-1-(k-1)-1}} = \vee{S^{n-k-1}}$$ by induction. This establishes Claim 1.\newline

\noindent {\bf Claim 2:} Let $(A,B) \in T_{i}$. $|Link_{X}^{>}(A,B) \cap X_{i-1}| \simeq \vee S^{i}$ \newline

We will first show that $Link_{X}^{>}(A,B) \cap X_{i-1} = Link_{X}^{>}(A,B)$. Observe that
\allowdisplaybreaks
\begin{flalign*}
    & \text{$Link_{X}^{>}(A,B) \cap X_{i-1} =$} & \\
    & \text{$\{(P,Q) \in X: A \subsetneq P, Q \subsetneq B, \text{ and } (P,Q) \in X_{i-1}\} = $} & \\
    & \text{$\{(P,Q)\in X: A \subsetneq P, Q \subsetneq B, \text{ and either } (P,Q) \in X_{0} \text{ or } (P,Q) \in T_j \text{ for } j < i\} = $} & \\
    & \text{$\{(P,Q)\in X: A \subsetneq P, Q \subsetneq B, \text{ and either } P \subseteq H \text{ or } rank(P) = n-j-k-1 \text{ and } P\ \cancel{\subseteq}\ H\}.$} & \\
    & \text{$\{(P,Q)\in X: A \subsetneq P, Q \subsetneq B, \text{ and either } P \subseteq H \text{ or } rank(Q) = j + k + 1 < i + k +1= rank(B) \text{ and } P\ \cancel{\subseteq}\ H\}.$} & \\
    & \text{The condition $rank(Q) < rank(B)$ follows from the fact that $Q \subsetneq B$. Thus } & \\
    & \text{$Link_{X}^{>}(A,B) \cap X_{i-1} = \{(P,Q) \in X: A \subsetneq P, Q \subsetneq B, \text{ and either } P \subseteq H \text{ or } P\ \cancel{\subseteq}\ H\} = $} & \\
    & \text{$\{(P,Q) \in X: A \subsetneq P, Q \subsetneq B\} = Link_X^>(A,B).$} &
\end{flalign*}
The maps $(P,Q) \mapsto (P\cap B, Q)$ and $(Z,W) \mapsto (Z \oplus A, W)$ for $(P,Q) \in Link_{X}^{>}(A,B)$ and $(Z,W) \in \newline S_B(\subseteq, \supseteq V)$ give isomorphisms between $$Link_{X}^{>}(A,B)\overset{\cong}{\longleftrightarrow} S_B(\subseteq, \supseteq V).$$ Since $rank(B) = k+i+1$ and $rank(V) = k$, $$|S_B(\subseteq, \supseteq V)| \simeq \vee S^{(k + i + 1) - k - 1} = \vee S^{i}$$ by the induction hypothesis. This establishes the claim. \newline

\noindent {\bf Claim 3:} Let $(A,B) \in T_{i}$. $|Link_{X}^{<}(A,B) \cap X_{i-1}| \simeq \vee S^{n-k-i-3}$ \newline

By a similar argument, 
\begin{flalign*}
    & \text{$Link_{X}^{<}(A,B) \cap X_{i-1} =$} & \\
    & \text{$\{(P,Q) \in X: P \subsetneq A, Q \supsetneq B, \text{ and } (P,Q) \in X_{i-1}\} = $} & \\
    & \text{$\{(P,Q) \in X: P \subsetneq A, Q \supsetneq B, \text{ and either } (P,Q) \in X_{0}\ \text{ or } (P,Q) \in T_j \text{ for } j < i\} = $} & \\
    & \text{$\{(P,Q) \in X: P \subsetneq A, Q \supsetneq B, \text{ and either } P \subseteq H\ \text{ or } P\ \cancel{\subseteq}\ H \text{ and } rank(P) = n -j - k - 1\}=$} & \\
    & \text{$\{(P,Q) \in X: P \subsetneq A, Q \supsetneq B, \text{ and either } P \subseteq H\ \text{ or } P\ \cancel{\subseteq}\ H \text{ and } rank(Q) = j + k + 1< i + k + 1 =  rank(B)\}.$} & \\
    & \text{Notice that since $Q \supsetneq B$ it follows that $rank(Q) > rank(B)$ so the second case cannot happen. Thus, } & \\
    & \text{$Link_{X}^{<}(A,B) \cap X_{i-1} =$} & \\
    & \text{$\{(P,Q) \in X: P \subsetneq A, Q \supsetneq B, \text{ and } P \subseteq H\} = $} & \\
    & \text{$\{(P,Q) \in X: P \subseteq A \cap H, Q \supsetneq B\}.$} &
\end{flalign*}
The maps $(P,Q) \mapsto (P, Q \cap A) $ and $(Z,W) \mapsto (Z, W \oplus B)$ for $(P,Q) \in Link_{X}^{>}(A,B)$ and $(Z,W) \in \newline S_B(\subseteq, \supseteq V)$ give isomorphisms between $$\{(P,Q) \in X: P \subsetneq A \cap H, Q \supseteq B\} \overset{\cong}{\longleftrightarrow} S_{A}(\subseteq H \cap A, \supseteq).$$ 
Observe $corank(A \cap H) = 1$ inside $A$ and $rank(A) = n-k-i-1$ hence by Theorem \ref{2.5} $$|S_{A}(\subseteq H \cap A, \supseteq)| \simeq \vee S^{n-k-i-3}.$$ This establishes the claim. \newline

\noindent {\bf Claim 4:} $|X_j| \simeq \vee S^{n-k-1}$ for all $0 \leq j \leq n-k-2$ \newline

We will show this by induction on $j$. The base case $j=0$ is given by Claim 1. To show $|X_j| \simeq \vee S^{n-k-1}$, we verify the hypothesis of Lemma \ref{2.4} with $X = X_j, Y = X_{j-1},\text{and }V = T_j$. Distinct elements of $T_j$ are not comparable and $|X_{j-1}| \simeq \vee S^{n-k-1}$ by the induction hypothesis. Lastly, using Claim 2 and Claim 3 $$|Link_X(A,B) \cap X_{j-1}| \cong |Link_{X}^{>}(A,B) \cap X_{j-1}| * |Link_{X}^{<}(A,B) \cap X_{j-1}| \simeq \vee S^{j} * \vee S^{n-k-j-3} \cong \vee S^{n-k-2}.$$ Hence by Lemma \ref{2.4}, $|X_j| \simeq \vee S^{n-k-1}.$ This establishes the claim. \newline

The theorem follows since $X = X_{n-k-2}.$
\end{proof}

The following corollary of Theorem \ref{3.1} is not needed to prove Theorem \ref{main}, so we do not provide a proof, but it can be obtained by dualizing and following Charney's strategy in Step 2 of \cite[Theorem 1.1]{Charney}. 
\begin{corollary} \label{cor}
    Let $M$ be a rank-$n$ projective module over a Dedekind domain $\Lambda$. Let $K_0$ be a corank-$k$ summand of M. Then $S_M(\subseteq K_0, \supseteq) \simeq \vee S^{n-k-1}$. 
\end{corollary}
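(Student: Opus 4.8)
The plan is to deduce Corollary \ref{cor} from Theorem \ref{3.1} by applying the duality functor $(-)^{\ast} := \mathrm{Hom}_{\Lambda}(-,\Lambda)$, which is a contravariant involution on the category of finitely-generated projective $\Lambda$-modules. First I would record the linear-algebra facts that make this work: for a finitely-generated projective $\Lambda$-module $M$ of rank $n$, the module $M^{\ast}$ is again projective of rank $n$, the canonical map $M \to M^{\ast\ast}$ is an isomorphism, and for any summand $N \subseteq M$ the annihilator $N^{\perp} := \{ f \in M^{\ast} : f|_{N} = 0\}$ is a summand of $M^{\ast}$ of rank $n - \mathrm{rank}(N)$ with $(N^{\perp})^{\perp} = N$; moreover, for summands, $N \subseteq N'$ if and only if $N'^{\perp} \subseteq N^{\perp}$. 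These follow by splitting the relevant short exact sequences and using reflexivity (indeed, a splitting $M = P \oplus Q$ induces $M^{\ast} \cong P^{\ast} \oplus Q^{\ast}$ under which $Q^{\perp} = P^{\ast}$ and $P^{\perp} = Q^{\ast}$).

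The main step is to check that $(P,Q) \mapsto (Q^{\perp}, P^{\perp})$ defines a poset isomorphism $\Phi \colon S_{M} \xrightarrow{\ \cong\ } S_{M^{\ast}}$, with inverse $(A,B) \mapsto (B^{\perp}, A^{\perp})$. Well-definedness uses that $P \oplus Q = M$ implies $Q^{\perp} \oplus P^{\perp} = M^{\ast}$ and that $P,Q$ proper nonzero forces $Q^{\perp}, P^{\perp}$ proper nonzero. For the order, $\Phi$ is the composite of the coordinate swap $(P,Q) \mapsto (Q,P)$, which is order-reversing on $S_{M}$, with the coordinatewise annihilator $(A,B) \mapsto (A^{\perp}, B^{\perp})$, which is again order-reversing since $(-)^{\perp}$ reverses inclusions; so $\Phi$ preserves order. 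Under $\Phi$, the condition $P \subseteq K_{0}$ defining $S_{M}(\subseteq K_{0}, \supseteq)$ translates, via $P \subseteq K_{0} \iff K_{0}^{\perp} \subseteq P^{\perp}$ together with the fact that $P^{\perp}$ is the second coordinate of $\Phi(P,Q)$, into the condition $B \supseteq K_{0}^{\perp}$ on the second coordinate. Hence $\Phi$ restricts to a poset isomorphism $S_{M}(\subseteq K_{0}, \supseteq) \xrightarrow{\ \cong\ } S_{M^{\ast}}(\subseteq, \supseteq K_{0}^{\perp})$.

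To finish, note that $K_{0}^{\perp} \cong (M/K_{0})^{\ast}$ is a summand of $M^{\ast}$ of rank $\mathrm{corank}(K_{0}) = k$; since $K_{0}$ is a proper nonzero summand we have $1 \le k \le n-1$, so $K_{0}^{\perp}$ is a proper nonzero rank-$k$ summand of the rank-$n$ projective $\Lambda$-module $M^{\ast}$. Theorem \ref{3.1} then gives $|S_{M^{\ast}}(\subseteq, \supseteq K_{0}^{\perp})| \simeq \vee S^{n-k-1}$, and transporting through the poset isomorphism $\Phi$ (which induces a homeomorphism on geometric realizations) yields $|S_{M}(\subseteq K_{0}, \supseteq)| \simeq \vee S^{n-k-1}$, as claimed.

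I expect the only real friction to be bookkeeping: verifying the annihilator identities over a general Dedekind domain and checking carefully that the two order-reversals compose to an order-preserving bijection restricting correctly to the relevant subposets; once this is in place the rest is immediate. Alternatively, one can avoid the duality functor and instead mimic the filtration argument in the proof of Theorem \ref{3.1} verbatim with the two coordinates interchanged --- choosing a corank-$1$ summand $H \supseteq K_{0}$ and a rank-$1$ summand $L$ with $H \oplus L = M$, filtering $S_{M}(\subseteq K_{0}, \supseteq)$ by the rank of the second coordinate among pairs with $L\ \cancel{\subseteq}\ Q$ --- which is the route indicated by Charney's Step 2; the duality argument above is shorter.
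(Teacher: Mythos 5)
The paper gives no proof of this corollary at all --- it only remarks that one ``can be obtained by dualizing and following Charney's strategy in Step 2 of [Charney, Theorem 1.1],'' i.e.\ by re-running the filtration argument of Theorem \ref{3.1} with the roles of the two coordinates exchanged. Your first argument is correct and is in fact a cleaner route than the one the paper gestures at: rather than dualizing the \emph{proof}, you dualize the \emph{statement}, using that $(-)^{\ast}=\mathrm{Hom}_{\Lambda}(-,\Lambda)$ is a contravariant involution on finitely generated projectives and that the annihilator of a summand is a summand of complementary rank with $(N^{\perp})^{\perp}=N$. The verification that $(P,Q)\mapsto(Q^{\perp},P^{\perp})$ is an order-preserving bijection $S_{M}\to S_{M^{\ast}}$ carrying $S_{M}(\subseteq K_{0},\supseteq)$ onto $S_{M^{\ast}}(\subseteq,\supseteq K_{0}^{\perp})$ is exactly as you describe (the two order reversals --- the coordinate swap and the inclusion-reversing annihilator --- compose to an order-preserving map), and since $K_{0}^{\perp}$ is a proper nonzero rank-$k$ summand of the rank-$n$ module $M^{\ast}$, Theorem \ref{3.1} applies directly. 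What this buys is that no new spectral/filtration argument is needed once Theorem \ref{3.1} is known in full generality; the cost is the (routine but necessary) bookkeeping with annihilators over a Dedekind domain, which you correctly identify and which indeed goes through for finitely generated projectives via splitting and reflexivity. Your second suggested route is essentially the one the authors had in mind. One small point worth making explicit: the statement implicitly assumes $K_{0}$ is a proper nonzero summand (so $1\le k\le n-1$), matching the hypothesis on $V$ in Theorem \ref{3.1}; you flag this correctly.
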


 We now prove Theorem \ref{main} which states the map from $S_M$ to $T_M^{op}$ given by $(P,Q) \mapsto Q$ induces a surjective map $\Tilde{St}(M) \to St(M)$.

\begin{proof}
    Recall $|S_M| = \Tilde{T}(M)$ and $|T_M| = |T_M^{op}| = T(M)$.  We verify the hypotheses of Lemma \ref{CP} with $A = S_M,B = T_M^{op}$, $m = 0$, and $(P,Q) \mapsto Q$ the poset map.  $T_M^{op}$ is Cohen-Macaulay by the Solomon-Tits Theorem here Theorem \ref{sol}. It remains to check the poset fiber of $V \in T_M^{op}$ is $ht(V)$-spherical. Let $rank(V) = k < n$. A maximal chain containing $V$ in $T_M^{op}$ is of the form $A_{n-1}  \supsetneq  A_{n-2} \supsetneq ... \supsetneq V$, where the $A_i$ are rank-$i$ summands of $M$. This gives $ht(V) = n - k -1$. Using Theorem \ref{3.1} we observe $$F_{\leq V} = \{(P,Q) \in S_M: Q \supseteq{V}\} = S_M(\subseteq , \supseteq{V}) \simeq \vee S^{n-k-1}.$$ Hence, the induced map from $$\Tilde{St}(M) = \widetilde{H}_{n-2}(S) \to \widetilde{H}_{n-2}(T) = St(M)$$ is surjective. 
\end{proof}

\bibliographystyle{amsalpha}
\bibliography{refs}

\vspace{8pt}
\footnotesize{\textit{Email address:} darmeanu@purdue.edu} \newline

\footnotesize{Purdue University, Department of Mathematics, 150 N. University, West Lafayette IN, 47907, USA} \newline 

\footnotesize{\textit{Email address:} jeremykmiller@purdue.edu} \newline

\footnotesize{Purdue University, Department of Mathematics, 150 N. University, West Lafayette IN, 47907, USA}

\end{document}